\pgfplotsset{compat=newest}
\DeclareMathOperator*{\argmin}{arg\,min}
\newcommand{\brackets}[1]{\left(#1\right)}
\newcommand{\sbrackets}[1]{\left[#1\right]}
\newtheorem{theorem}{Theorem}[section]
\newtheorem{lemma}[theorem]{Lemma}
\acrodef{mmse}[MMSE]{{minimum mean-squared error}}
\acrodef{dm}[DM]{{decision maker}}
\acrodef{lqg}[LQG]{Linear-Quadratic-Gaussian}
\acrodef{pdf}[p.d.f.]{probability density function}
\acrodef{dpc}[DPC]{Dirty Paper Coding}
\title{\LARGE \bf
Low-Power Optimal Strategy for Witsenhausen Counterexample
}
\author{Mengyuan Zhao$^{1}$, Maël Le Treust$^{2}$, Tobias J. Oechtering$^{1}$% <-this % stops a space
\thanks{This work is supported by Swedish Research Council (VR) under grant 2020-03884. The work of M. Le Treust is supported in part by PEPR NF FOUNDS ANR-22-PEFT-0010.}% <-this % stops a space
\thanks{$^{1}$M. Zhao and T. J. Oechtering are with the Division of Information Science and Engineering,
        KTH Royal Institute of Technology, 100 44 Stockholm, Sweden
        {\tt\small \{mzhao, oech\}@kth.se }}
\thanks{$^{2}$M. Le Treust is with CNRS, Inria, IRISA UMR 6074, University of Rennes,
        F-35000 Rennes, France
        {\tt\small mael.le-treust@cnrs.fr}}
% \thanks{H. Kwakernaak is with Faculty of Electrical Engineering, Mathematics and Computer Science,
%         University of Twente, 7500 AE Enschede, The Netherlands
%         {\tt\small h.kwakernaak@autsubmit.com}}%
% \thanks{P. Misra is with the Department of Electrical Engineering, Wright State University,
%         Dayton, OH 45435, USA
%         {\tt\small pmisra@cs.wright.edu}}%
}
\begin{document}

\maketitle
\thispagestyle{empty}
\pagestyle{empty}

%%%%%%%%%%%%%%%%%%%%%%%%%%%%%%%%%%%%%%%%%%%%%%%%%%%%%%%%%%%%%%%%%%%%%%%%%%%%%%%%
\begin{abstract}
We discuss the Witsenhausen counterexample from the perspective of varying power budgets and propose a low-power estimation (LoPE) strategy. Specifically, our LoPE approach designs the first decision-maker (DM) a quantization step function of the Gaussian source state, making the target system state a piecewise linear function of the source with slope one. This approach contrasts with Witsenhausen’s original two-point strategy, which instead designs the system state itself to be a binary step. While the two-point strategy can outperform the linear strategy in estimation cost, it, along with its multi-step extensions, typically requires a substantial power budget. Analogous to 
Binary Phase Shift Keying (BPSK) communication for Gaussian channels, we show that the binary LoPE strategy attains first-order optimality in the low-power regime, matching the performance of the linear strategy as the power budget increases from zero.
Our analysis also provides an interpretation of the previously observed near-optimal sloped step function ("sawtooth") structure to the Witsenhausen counterexample: In the low-power regime, power saving is prioritized, in which case the LoPE strategy dominates, making the system state a piecewise linear function with slope close to one. Conversely, in the high-power regime, setting the system state as a step function with the slope approaching zero facilitates accurate estimation. Hence, the sawtooth solution can be seen as a combination of both strategies.

\end{abstract}

%%%%%%%%%%%%%%%%%%%%%%%%%%%%%%%%%%%%%%%%%%%%%%%%%%%%%%%%%%%%%%%%%%%%%%%%%%%%%%%%
\section{Introduction}
%%%%%%%%%%%%%%%%%%%%%%%%%%%%%%%%%%%%%%%%%%%%%%%%%%%%%%%%%%%%%%%%%%%%%%%%%%%%%%%%%%%%%% General Intro & Trending approaches %%%%%%%%%%%%%%%%%%%%%%%%%%%%%%%%%%%%%%%%%%%%%%%%%%%%%%%%%%%%%%%%%%%%%%%%%%%%%%%%%%%%%%%%%%

Witsenhausen counterexample \cite{witsenhausen1968} has remained a fundamental challenge in decentralized control \cite{ho1972team,bansal1986stochastic, silva2010control, yuksel2013stochastic,gupta2015existence} for more than 55 years ever since it was introduced. Over the past few decades, considerable amount of approaches have been investigated through directions such as information theory based on vector-valued extension \cite{Grover2010Witsenhausen,agrawal2015implicit, Treust2024power,zhao2024coordination, grover2013approximately,zhao2025zero,zhao2024causal,zhao2024CDC}, approximation via numerical approach and learning \cite{romvary2015numerical, karlsson2011iterative,baglietto2001numerical,telsang2021numerical, mceneaney2015optimization}, signaling mechanism \cite{lee2001hierarchical,sahai1999information, wu2011transport}, and game theory \cite{ajorlou2020local,li2009learning}, etc. As illustrated in Fig.~\ref{fig: wits model}, the original Witsenhausen counterexample is formulated as a two-stage decentralized stochastic control problem with the following objective function:
\begin{align}
    \min_{\gamma_1,\gamma_2}\sbrackets{(k^2\cdot P + S)},\label{eq: obj 1}
\end{align}
where \vspace{-0.2cm}
\begin{align}
&\text{power cost: }&\quad &P= \mathbb E[U_1^2],\label{eq: power cost}\\
&\text{estimation cost: }&\quad&S = \mathbb E[(X_1-U_2)^2],\label{eq: est cost}
\end{align}
\vspace{-0.2cm}with\vspace{-0.2cm}
\begin{align}
 U_1 &= \gamma_1(X_0),\nonumber\\
 X_1 &= U_1+X_0\label{eq: X_1 generation},\\
 Y_1 &= X_1+Z_1,\nonumber \\
 U_2 &= \gamma_2(Y_1),\nonumber
\end{align}
and $X_0\sim\mathcal{N}(0,Q),
Z_1\sim\mathcal{N}(0,N)$, and $k^2$ is a trade-off constant that determines the relative importance of the action costs \eqref{eq: power cost}-\eqref{eq: est cost} across the two stages.

Here, we have two DMs, $\gamma_1$ and $\gamma_2$. The first DM $\gamma_1$ has perfect knowledge of the source state $X_0$ but acts with a power cost. The second DM $\gamma_2$ operates without a power cost but only observes the noisy version $Y_1 = X_1 + Z_1$. This feature that the subsequent DMs lack full access to the information held by prior DMs, is known as the \textit{nonclassical information pattern} \cite{mahajan2012information,ho1972team}. Despite its simplicity, this information structure turns the problem into non-convex, making the search for the universal optimality hard \cite{bansal1986stochastic, baglietto1997nonlinear}. 

Furthermore, for a given $\gamma_1$, the optimal $\gamma_2$ is given by the minimum mean-square error (MMSE) estimator of $X_1$ given $Y_1$ \cite{kay1993fundamentals}. As a result, the estimation cost $S$ becomes
\begin{align*}
    S = \mathbb E[(X_1-\mathbb E[X_1|Y_1])^2].
\end{align*}
Thus, the problem \eqref{eq: obj 1} effectively reduces to optimizing over only the strategy $\gamma_1$. This reduction imparts the dual role of control to the first DM: it needs to control the system state while transmitting relevant information simultaneously.

\begin{figure}[t]
      \centering
      % \documentclass{article}
% \usepackage{tikz}
% \usepackage{amssymb}

% \begin{document}

% \begin{figure}[!ht]
% \centering
\begin{tikzpicture}[scale=0.86]
    % Frames
    \draw (2,0) rectangle (3,1);
    \draw (6.8,0) rectangle (7.8,1);

    % Circles with Plus
    \draw (4.2,0.5) circle (0.2) node {$+$};
    \draw (5.6,0.5) circle (0.2) node {$+$};

    % Dots and Annotations
    \filldraw (1,1.5) circle (2pt) node[above] {$X_{0}\sim \mathcal{N}(0,Q)$};
    \filldraw (5.6,1.5) circle (2pt) node[above] {$Z_{1}\sim \mathcal{N}(0,N)$};

    % Arrows and Text
    \draw[->] (1,1.5) -- (1,0.5) -- (2,0.5);
    \draw[->] (1,1.5) -- (4.2,1.5) -- (4.2,0.7);
    \draw[->] (3,0.5) -- (4,0.5);
    \draw[->] (4.4,0.5) -- (5.4,0.5);
    \draw[->] (4.9,0.5) -- (4.9,-0.5) -- (8.8,-0.5);
    \draw[->] (5.6,1.5) -- (5.6,0.7);
    \draw[->] (5.8,0.5) -- (6.8,0.5);
    \draw[->] (7.8,0.5) -- (8.8,0.5);

    % More Annotations
    \node at (1.5,0.8) {$X_0$};
    \node at (3.5,0.8) {$U_{1}$};
    \node at (4.9,0.8) {$X_{1}$};
    \node at (6.3,0.8) {$Y_{1}$};
    \node at (8.3,0.8) {$U_{2}$};
    \node at (8.3,-0.2) {$X_{1}$};
    \node at (2.5,0.5) {$\gamma_1$};
    \node at (7.3,0.5) {$\gamma_2$};
\end{tikzpicture}
% \caption{The i.i.d. state and the channel noise are drawn according to Gaussian distributions $X_0^{n}\sim \mathcal{N}(0,Q\mathbb{I})$ and $Z_1^{n}\sim \mathcal{N}(0,N\mathbb{I})$.}
% \label{fig:sim}
% \end{figure}

% \end{document}
      \caption{\small{Witsenhausen Counterexample as a two-stage stochastic control problem with two DMs $\gamma_1,\gamma_2$.}}
      \label{fig: wits model}
      \vspace{-0.4cm}
   \end{figure}
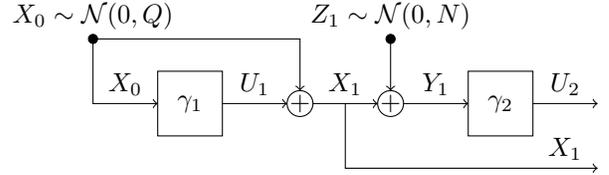

Witsenhausen \cite{witsenhausen1968} showed that the nonlinear two-point strategy $U_1 = a\cdot \text{sign}(X_0) - X_0$ for some $a\geq 0$ outperforms the best linear strategy when the trade-off constant $k^2$ is small, emphasizing estimation accuracy over power conservation. This nonlinear approach transforms the system state $X_1 = X_0+U_1$ to be binary, simplifying the estimation for the second DM \cite{grover2011source}. Generalizing this binary signaling idea, Lee et al. investigated a hierarchical search framework \cite{lee2001hierarchical} by taking $X_1$ to be an $n$-step function of $X_0$. While this approach improves estimation accuracy, it demands a big minimum-required power level. This minimum power level decreases as the number of quantization steps $n$ increases.

In this paper, instead of fixing the trade-off $k^2$, we discuss the original objective function \eqref{eq: obj 1} from a dynamic perspective and analyze how the estimation cost evolves as the available power varies, exploring this trend in the two-dimensional power-estimation cost region. More precisely, we look at the power-estimation trade-off in two regimes: 1) \textbf{power-sufficient regime} (i.e., $k^2$ small in \eqref{eq: obj 1}), where estimation accuracy is prioritized at the expense of enough power consumption. Most previous quantization-based approaches, such as \cite{karlsson2011iterative, lee2001hierarchical,vu2020hierachical}, focused on this regime. 2) \textbf{power-deficient regime} (i.e., $k^2$ big), where optimizing communication power usage becomes essential. In the second low-power regime, the optimal strategy acknowledged so far has been the optimal linear strategy, which was shown in \cite{wu2011transport} to be asymptotically optimal when the signal-to-noise ratio (SNR,  $\triangleq Q/N$) tends to zero. Later in this paper, we will further show that its estimation cost achieves the first-order optimal decay rate when power budget $P$ increases from zero, i.e., the first-order optimality.

As a main contribution of our work, we propose a novel low-power estimation (LoPE) strategy: Instead of $X_1$, we take the first DM's output $U_1$ a step function of $X_0$ with negative amplitude\footnote{This construction resembles that of \cite{lipsa2011optimal}, which challenges the optimality of linear schemes in a sequential LQG control problem}. As a result, the system state $X_1$ becomes a piecewise monotonically increasing linear function with slope one. Remarkably, even in its simplest form, so-called the Binary Phase Shift Keying (BPSK), LoPE attains first-order optimality in the low-power regime, matching the optimal linear scheme’s performance. Simulations further show that in certain cases, LoPE outperforms the optimal linear strategy in the low-power regime. Together with the hierarchical search method, we establish that linear controllers are strictly suboptimal in this case, providing an answer to the open question posed in \cite{wu2011transport} regarding the suboptimality of linear controllers for all $k^2>0$ at our chosen SNR.

In many approaches \cite{karlsson2011iterative,baglietto2001numerical,telsang2021numerical,lee2001hierarchical,sahai1999information,mceneaney2015optimization}, to name a few, a sloped step function (also refers to as a 'sawtooth '-type function) structure of $X_1$ was discovered to be a near-optimal solution. Our LoPE strategy provides new insights on the role of the slope in the sawtooth structure. Toward this end, we conclude that LoPE ($X_1$ a piecewise linear function with slope one) is communication-efficient and hence dominates the low-power regime, while hierarchical search ($X_1$ a perfect step function with slope zero) facilitates a more accurate estimation in the high-power regime. Hence, the near-optimal sawtooth solution could be viewed as a combination of both strategies.

% : Instead of transmitting $X_1$ to be a perfect step function of $X_0$, each staircase has a slope with the slope varying in different scenarios - bigger $k^2$, slope closer to 1; smaller $k^2$, slope closer to 0.
% This power efficiency trait for designing $X_1$ to be a piecewise linear function with slope $1$ in LoPE scheme opens the gate for interpreting the slope in the sloped step function result: In the low-power regime, a piecewise linear strategy with a slope close to 1 is power-efficient, and in the high-power scenario, reducing the slope to approach an ideal step function with zero slope facilitates more precise estimation by the second DM.

This paper is organized as follows. In Section~\ref{sec: low power est framework}, we introduce the LoPE strategy and derive closed-form expressions for its cost. Section~\ref{sec: first-order optimality} analyzes the first-order optimality of both the LoPE and the optimal linear policy. In Section~\ref{sec: empirical study}, we present numerical results and discuss some insights for the slope behavior observed in near-optimal solutions.

Throughout this paper, capital letters, e.g. $X_{0}$ denote random variables while lowercase letters, e.g. $x_0$ denote realizations. $\mathbb P(\cdot), f(\cdot)$ and $f(\cdot|\cdot)$ denote the general probability, probability density functions (PDFs) and conditional PDFs respectively. $\mathbf{1}_{\{\cdot\}}$ is the indicator function. $\Phi(\cdot)$, $\phi(\cdot)$ are the Gaussian cumulative density function (CDF) and PDF.

\section{Low-Power Estimation Framework}\label{sec: low power est framework}

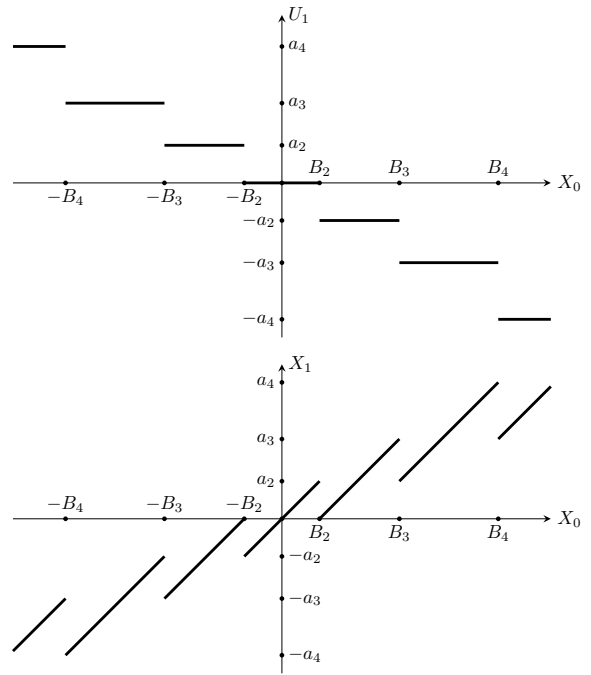
\begin{figure}[t]
  \centering
       \resizebox{0.9\linewidth}{!}{% \documentclass{article}
% \usepackage{tikz}

% \begin{document}

\begin{tikzpicture}[>=stealth, scale=1, every node/.style={scale=0.8}]

    % Draw the second x-y coordinate system (X_0, X_1) on the right
    \draw[->, thin] (-4,0) -- (4,0) node[right] {\(X_0\)};
    \draw[->, thin] (0,-2.3) -- (0,2.5) node[right] {\(U_1\)};

    % Draw X_1 function
    \draw[very thick] (0,0) -- (-0.5598,0);  % X_1 = -a, X_0 <= 0
    \draw[very thick]  (-0.5598, 0.5598) -- (-1.7472,0.5598);
    \draw[very thick] (-1.7472, 1.1874) -- (-3.2182, 1.1874);% X_1 = a, X_0 >= 0
    \draw[very thick]  (-3.2182,2.0307) -- (-4,2.0307);
    % \draw[red, dotted]  (-4, 4) -- (-5, 4);  % X_1 = a, X \draw[red, thick]_0 >= 0
    % \draw[red, thick] (-5, 5) -- (-6.5, 5);
    \draw[very thick] (0, 0) -- (0.5598, 0);
    \draw[very thick] (0.5598,-0.5598) -- (1.7472,-0.5598);
    \draw[very thick] (1.7472, -1.1874) -- (3.2182, -1.1874);
    \draw[very thick] (3.2182, -2.0307) -- (4, -2.0307);

    % Marking points at (0, a) and (0, -a)
    \fill (0,0) circle (1pt);
    \fill (0,0.5598) circle (1pt) node[right] {\(a_2\)};
    \fill (0,1.1874) circle (1pt) node[right] {\(a_3\)};
    \fill (0,2.0307) circle (1pt) node[right] {\(a_4\)};
    \fill (0,-0.5598) circle (1pt) node[left] {\(-a_2\)};
    \fill (0,-1.1874) circle (1pt) node[left] {\(-a_3\)};
    \fill (0,-2.0307) circle (1pt) node[left] {\(-a_4\)};
     \fill (0.5598,0) circle (1pt) node[above] {\(B_2\)};
    \fill (1.7472,0) circle (1pt) node[above] {\(B_3\)};
    \fill (3.2182,0) circle (1pt) node[above] {\(B_4\)};
    \fill (-0.5598,0) circle (1pt) node[below] {\(-B_2\)};
    \fill (-1.7472,0) circle (1pt) node[below] {\(-B_3\)};
    \fill (-3.2182,0) circle (1pt) node[below] {\(-B_4\)};
    % % \fill (-5,0) circle (2pt) node[above] {\(-B_5\)};
    % \fill (-5,0) circle (2pt) node[below] {\(-B_n\)};
    % \fill (1,0) circle (2pt) node[above] {\(B_1\)};
    % \fill (2,0) circle (2pt) node[above] {\(B_2\)};
    % \fill (3,0) circle (2pt) node[above] {\(B_3\)};
    % % \fill (4,0) circle (2pt) node[below] {\(B_4\)};
    % % \fill (5,0) circle (2pt) node[below] {\(B_5\)};
    % \fill (5,0) circle (2pt) node[above] {\(B_n\)};

%%%%%%%%%%%%%%%%%%%%%%%%%%%%%%%%%%%%%%%%%%%%%%%%%%%%%%%
     \draw[->, thin] (-4.0,-5.0) -- (4.0,-5.0) node[right] {\(X_0\)};
    \draw[->, thin] (0,-7.3) -- (0,-2.7) node[right] {\(X_1\)};

\draw[very thick] (0,-5) -- (-0.5598,-5-0.5598);  % X_1 = -a, X_0 <= 0
\draw[very thick] (-0.5598,-5) -- (-1.7472,0.5598-1.7472-5);

\draw[very thick] (-1.7472, 1.1874-1.7472-5) -- (-3.2182, 1.1874-3.2182-5);

    \draw[very thick]  (-3.2182,2.0307-3.2182-5) -- (-4,2.0307-9);
    % \draw[red, dotted]  (-4, 4) -- (-5, 4);  % X_1 = a, X \draw[red, thick]_0 >= 0
    % \draw[red, thick] (-5, 5) -- (-6.5, 5);
    \draw[very thick] (0, -5) -- (0.5598, -5+0.5598);
    \draw[very thick] (0.5598,-5) -- (1.7472,-0.5598+1.7472-5);
    \draw[very thick] (1.7472, -1.1874+1.7472-5) -- (3.2182, -1.1874+3.2182-5);
    \draw[very thick] (3.2182, -2.0307+3.2182-5) -- (4, -2.0307-1);

\fill (0.5598,-5) circle (1pt) node[below] {\(B_2\)};
    \fill (1.7472,-5) circle (1pt) node[below] {\(B_3\)};
    \fill (3.2182,-5) circle (1pt) node[below] {\(B_4\)};
    \fill (-0.5598,-5) circle (1pt) node[above] {\(-B_2\)};
    \fill (-1.7472,-5) circle (1pt) node[above] {\(-B_3\)};
    \fill (-3.2182,-5) circle (1pt) node[above] {\(-B_4\)};
    \fill (0,-5) circle (1pt);
    \fill (0,0.5598-5) circle (1pt) node[left] {\(a_2\)};
    \fill (0,1.1874-5) circle (1pt) node[left] {\(a_3\)};
    \fill (0,2.0307-5) circle (1pt) node[left] {\(a_4\)};
    \fill (0,-5.5598) circle (1pt) node[right] {\(-a_2\)};
    \fill (0,-6.1874) circle (1pt) node[right] {\(-a_3\)};
    \fill (0,-7.0307) circle (1pt) node[right] {\(-a_4\)};

\end{tikzpicture}

% \end{document}} % Include your TikZ figure
  \caption{\small{Illustration of 4-step LoPE Scheme of $U_1$ (up) and $X_1=U_1+X_0$ (down) parameterized by $0 = a_1\leq a_2\leq a_3\leq a_4,0=B_1\leq B_2\leq B_3\leq B_4$.}}
  \label{fig:npointeq_U_1}
  \vspace{-0.3cm}
\end{figure}

We introduce the LoPE scheme by designing the output of the first DM $\gamma_1$ to be an $n$-step function of $X_0$: For a fixed tuple of $2n$ parameters $(\mathbf{a}_1^n = [a_1,...,a_n]^\top, \mathbf{B}_1^n = [B_1,...,B_n]^\top)$ satisfying 
\begin{align}
    &0\leq a_1\leq a_2\leq ...\leq a_n<\infty,\label{eq: a_i}\\
    &0=B_1\leq B_2\leq ...\leq B_n<\infty,\label{eq: B_i}
\end{align}
the $n$-step function $\gamma_1: X_0\rightarrow U_1$ is defined as follows 
\begin{align}
    U_1 &= \left\{
          \begin{aligned}
              &   +a_i    &\quad& \text{if } X_0\in(-B_{i+1},-B_i], \\
              &  -a_i   &\quad& \text{if } X_0\in[B_i,B_{i+1}), \\
          \end{aligned}   
        \right.\nonumber\\
        &\quad\quad\quad\quad\quad \quad\quad\quad  \text{for }i=1,...,n,\nonumber\\
        &=\sum_{i=1}^{n}a_i(\mathbf{1}_{X_0\in(-B_{i+1}, -B_i]} - \mathbf{1}_{X_0\in[B_i, B_{i+1})}),\label{eq: U_1}
\end{align}
where, for notational consistency, we define 
$B_{n+1}=\infty$. Note that, the amplitudes of $U_1$ have the opposite sign of the source $X_0$, making it a monotonically decreasing function.

Given \eqref{eq: X_1 generation} and \eqref{eq: U_1}, the system state becomes
\begin{align*}
    X_1 = \sum_{i=1}^{n}(X_0+a_i)(\mathbf{1}_{X_0\in(-B_{i+1}, -B_i]} - \mathbf{1}_{X_0\in[B_i, B_{i+1})}),
\end{align*}
which is a piecewise linear function of $X_0$. As a result, $X_1$ is symmetric around zero, and is also strictly monotonically increasing with slope one in each segment. Since $U_1$ has negative amplitudes, the segments in $X_1$ overlap and are concentrated around zero. An example of a 4-step LoPE mapping is shown in Fig.~\ref{fig:npointeq_U_1}, depicting both the step function $U_1$ and the resulting system state $X_1$.

In order to distinguish the LoPE scheme with previous quantization-based approaches, we now compare the simplest binary form of LoPE — the BPSK modulation (i.e., $n=1$) — with Witsenhausen’s two-point strategy. If we assume both strategies are parameterized by $a\geq 0$, their expressions are given as follows:
\begin{align}
        \text{BPSK: }\quad &U_1 = -a\cdot \text{sign}(X_0),\label{eq: BPSK}\\
        \text{Two-point: }\quad &U_1 = a\cdot\text{sign}(X_0)-X_0.\label{eq: 2-point}
    \end{align}
As we can see from Fig.~\ref{fig:comp2_point}, in the LoPE scheme, $U_1$ is itself a step function (with negative amplitude), whereas in Witsenhausen’s two-point strategy, the resulting system state $X_1$ is a step function.

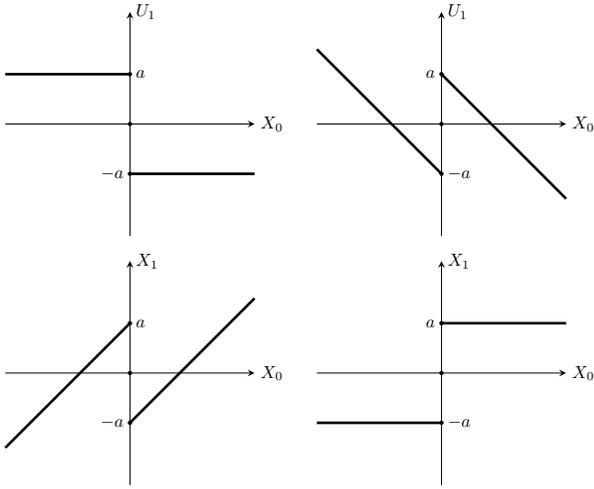
\begin{figure}[t]
  \centering
  \resizebox{0.93\linewidth}{!}{% \documentclass{article}
% \usepackage{tikz}

% \begin{document}

\begin{tikzpicture}[>=stealth, scale=1, every node/.style={scale=0.8}]

    % Draw the second x-y coordinate system (X_0, X_1) on the right
    \draw[->, thin] (-4,0) -- (0,0) node[right] {\(X_0\)};
    \draw[->, thin] (-2,-1.8) -- (-2,1.8) node[right] {\(U_1\)};

    % Draw X_1 function
    \draw[very thick] (-4,0.8) -- (-2,0.8);  % X_1 = -a, X_0 <= 0
    \draw[very thick]  (-2, -0.8) -- (-0,0-0.8);
    % \draw[very thick] (-1.7472, 1.1874) -- (-3.2182, 1.1874);% X_1 = a, X_0 >= 0
    % \draw[very thick]  (-3.2182,2.0307) -- (-4,2.0307);
    % % \draw[red, dotted]  (-4, 4) -- (-5, 4);  % X_1 = a, X \draw[red, thick]_0 >= 0
    % % \draw[red, thick] (-5, 5) -- (-6.5, 5);
    % \draw[very thick] (0, 0) -- (0.5598, 0);
    % \draw[very thick] (0.5598,-0.5598) -- (1.7472,-0.5598);
    % \draw[very thick] (1.7472, -1.1874) -- (3.2182, -1.1874);
    % \draw[very thick] (3.2182, -2.0307) -- (4, -2.0307);

    % Marking points at (0, a) and (0, -a)
    \fill (-2,0) circle (1pt);
    \fill (-2,0.8) circle (1pt) node[right] {\(a\)};
    \fill (-2,-0.8) circle (1pt) node[left] {\(-a\)};
    % \fill (0,2.0307) circle (1pt) node[right] {\(a_4\)};
    % \fill (0,-0.5598) circle (1pt) node[left] {\(-a_2\)};
    % \fill (0,-1.1874) circle (1pt) node[left] {\(-a_3\)};
    % \fill (0,-2.0307) circle (1pt) node[left] {\(-a_4\)};
    %  \fill (0.5598,0) circle (1pt) node[above] {\(B_2\)};
    % \fill (1.7472,0) circle (1pt) node[above] {\(B_3\)};
    % \fill (3.2182,0) circle (1pt) node[above] {\(B_4\)};
    % \fill (-0.5598,0) circle (1pt) node[below] {\(-B_2\)};
    % \fill (-1.7472,0) circle (1pt) node[below] {\(-B_3\)};
    % \fill (-3.2182,0) circle (1pt) node[below] {\(-B_4\)};
    % % \fill (-5,0) circle (2pt) node[above] {\(-B_5\)};
    % \fill (-5,0) circle (2pt) node[below] {\(-B_n\)};
    % \fill (1,0) circle (2pt) node[above] {\(B_1\)};
    % \fill (2,0) circle (2pt) node[above] {\(B_2\)};
    % \fill (3,0) circle (2pt) node[above] {\(B_3\)};
    % % \fill (4,0) circle (2pt) node[below] {\(B_4\)};
    % % \fill (5,0) circle (2pt) node[below] {\(B_5\)};
    % \fill (5,0) circle (2pt) node[above] {\(B_n\)};

%%%%%%%%%%%%%%%%%%%%%%%%%%%%%%%%%%%%%%%%%%%%%%%%%%%%%%%
     \draw[->, thin] (-4.0,-4.0) -- (0,-4.0) node[right] {\(X_0\)};
    \draw[->, thin] (-2,-4-1.8) -- (-2,-4+1.8) node[right] {\(X_1\)};

\draw[very thick] (-2,-3.2) -- (-4,-5.2);  % X_1 = -a, X_0 <= 0
\draw[very thick] (-2,-4.8) -- (0,-2.8);

% \draw[very thick] (-1.7472, 1.1874-1.7472-5) -- (-3.2182, 1.1874-3.2182-5);

%     \draw[very thick]  (-3.2182,2.0307-3.2182-5) -- (-4,2.0307-9);
%     % \draw[red, dotted]  (-4, 4) -- (-5, 4);  % X_1 = a, X \draw[red, thick]_0 >= 0
%     % \draw[red, thick] (-5, 5) -- (-6.5, 5);
%     \draw[very thick] (0, -5) -- (0.5598, -5+0.5598);
%     \draw[very thick] (0.5598,-5) -- (1.7472,-0.5598+1.7472-5);
%     \draw[very thick] (1.7472, -1.1874+1.7472-5) -- (3.2182, -1.1874+3.2182-5);
%     \draw[very thick] (3.2182, -2.0307+3.2182-5) -- (4, -2.0307-1);

% \fill (0.5598,-5) circle (1pt) node[below] {\(B_2\)};
%     \fill (1.7472,-5) circle (1pt) node[below] {\(B_3\)};
%     \fill (3.2182,-5) circle (1pt) node[below] {\(B_4\)};
%     \fill (-0.5598,-5) circle (1pt) node[above] {\(-B_2\)};
%     \fill (-1.7472,-5) circle (1pt) node[above] {\(-B_3\)};
%     \fill (-3.2182,-5) circle (1pt) node[above] {\(-B_4\)};
    \fill (-2,-4) circle (1pt);
    \fill (-2,-3.2) circle (1pt) node[right] {\(a\)};
    \fill (-2,-4.8) circle (1pt) node[left] {\(-a\)};
%     \fill (0,2.0307-5) circle (1pt) node[left] {\(a_4\)};
%     \fill (0,-5.5598) circle (1pt) node[right] {\(-a_2\)};
%     \fill (0,-6.1874) circle (1pt) node[right] {\(-a_3\)};
%     \fill (0,-7.0307) circle (1pt) node[right] {\(-a_4\)};

\draw[->, thin] (1,0) -- (5,0) node[right] {\(X_0\)};
    \draw[->, thin] (3,-1.8) -- (3,1.8) node[right] {\(U_1\)};
    \draw[very thick] (1,1.2) -- (3,-0.8);  % X_1 = -a, X_0 <= 0
    \draw[very thick] (3,0.8) -- (5,-1.2);  % X_1 = -a, X_0 <= 0

 \fill (3,0) circle (1pt);
    \fill (3,0.8) circle (1pt) node[left] {\(a\)};
    \fill (3,-0.8) circle (1pt) node[right] {\(-a\)};

 \draw[->, thin] (1,-4.0) -- (5,-4.0) node[right] {\(X_0\)};
    \draw[->, thin] (3,-5.8) -- (3,-2.2) node[right] {\(X_1\)};
\draw[very thick] (1,-4.8) -- (3,-4.8);  % X_1 = -a, X_0 <= 0
\draw[very thick] (3,-3.2) -- (5,-3.2);  % X_1 = -a, X_0 <= 0

     \fill (3,-4) circle (1pt);
    \fill (3,-3.2) circle (1pt) node[left] {\(a\)};
    \fill (3,-4.8) circle (1pt) node[right] {\(-a\)};

\end{tikzpicture}

% \end{document}} % Include your TikZ figure
  \caption{\small{Illustration of the BPSK LoPE Scheme (left) defined in \eqref{eq: BPSK} and Witsenhausen's two-point scheme (right) defined in \eqref{eq: 2-point}. In LoPE, $U_1$ is defined as a step function, whereas in the two-point strategy, $X_1$ results in a step function instead.}}
  \label{fig:comp2_point}
  \vspace{-0.3cm}
\end{figure}

Furthermore, if we define the probability that $X_0$ lies in these quantization intervals by
\begin{align*}
    p_i& \triangleq \mathbb P(X_0\in(-B_{i+1}, -B_i])= \mathbb P(X_0\in[B_i, B_{i+1}))\\
       &=\Phi\brackets{\frac{B_{i+1}}{\sqrt{Q}}} - \Phi\brackets{\frac{B_{i}}{\sqrt{Q}}},\quad \text{for }i=1,...,n,
\end{align*}
the power cost induced by scheme \eqref{eq: U_1} is hence
\begin{align*}
    P = \mathbb E[U_1^2] = 2\sum_{i=1}^n a_i^2p_i.
\end{align*}

The probability distribution of $X_1$ takes the following piecewise-defined Gaussian form
\vspace{-0.15cm}
\begin{align}
    f_{X_1}(x) &= \frac{1}{\sqrt{Q}}\sum_{i=1}^n \left[\phi\brackets{\frac{x-a_i}{\sqrt{Q}}}\mathbf{1}_{x\in(-B_{i+1}+a_i, -B_i+a_i]}\right.\nonumber \\
    &\quad \left.+ \phi\brackets{\frac{x+a_i}{\sqrt{Q}}}\mathbf{1}_{x\in[B_i-a_i, B_{i+1}-a_i) }\right].\label{eq: distr X_1}
\end{align}
These pieces involved in \eqref{eq: distr X_1} overlap, leading to regions where multiple Gaussian components sum and contribute to the overall density. As we will see in Sec. \ref{sec: empirical study}, these optimized overlapping segments collectively reinforce the probability mass at zero, making the distribution of $X_1$ concentrate. 

The resulting power and estimation costs induced by LoPE scheme \eqref{eq: U_1} are summarized in Theorem \ref{thm: lope}. The derivation of the estimation cost is detailed in Appendix A.

\begin{theorem}\label{thm: lope}
For $n\in\mathbb N$ and a fixed tuple of  parameters $(\mathbf{a}_1^n = [a_1,...,a_n]^\top, \mathbf{B}_1^n = [B_1,...,B_n]^\top)$ satisfying \eqref{eq: a_i} and \eqref{eq: B_i} the LoPE scheme has the power and estimation costs of:\
\begin{align}
    P(\mathbf{a}_1^n, \mathbf{B}_1^n)& = 2\sum_{i=1}^n a_i^2p_i,\label{eq: power cost n step}\\
    S(\mathbf{a}_1^n, \mathbf{B}_1^n)&= Q - 4\sqrt{Q}\sum_{i=1}^na_i\brackets{\phi\brackets{\frac{B_i}{\sqrt{Q}}} - \phi\brackets{\frac{B_{i+1}}{\sqrt{Q}}}}\nonumber\\
    &\quad +2\sum_{i=1}^na_i^2p_i - \int \frac{\brackets{\sum_{i=1}^n (E_{-i}+E_i)}^2}{\sum_{i=1}^n (F_{-i}+F_i)} dy,\label{eq: est cost n step}
\end{align}
where quantities $E_{-i}, E_i, F_{-i}, F_i$ for $i=1,...,n$ are given in \eqref{eq: F_-i} - \eqref{eq: E_i}.
    
\end{theorem}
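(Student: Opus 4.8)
The plan is to obtain the power cost by a one-line direct computation and the estimation cost from the MMSE orthogonality principle, after which everything reduces to elementary one-dimensional Gaussian integrals.

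\textbf{Power cost.} By \eqref{eq: U_1}, $U_1$ is a discrete random variable taking the value $+a_i$ on $\{X_0\in(-B_{i+1},-B_i]\}$ and $-a_i$ on $\{X_0\in[B_i,B_{i+1})\}$, each event having probability $p_i$. Hence $P=\mathbb E[U_1^2]=\sum_{i=1}^n a_i^2(p_i+p_i)=2\sum_{i=1}^n a_i^2 p_i$, which is \eqref{eq: power cost n step}.

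\textbf{Estimation cost: reduction and $\mathbb E[X_1^2]$.} Since $\gamma_2$ is the MMSE estimator, $S=\mathbb E[X_1^2]-\mathbb E[\widehat X_1^2]$ with $\widehat X_1=\mathbb E[X_1\mid Y_1]$. First, expanding $X_1=X_0+U_1$ gives $\mathbb E[X_1^2]=Q+2\,\mathbb E[X_0U_1]+P$; the cross term is computed segment by segment using the identity $\int_\alpha^\beta \tfrac{x}{\sqrt Q}\phi\!\big(\tfrac{x}{\sqrt Q}\big)\,dx=\sqrt Q\big(\phi(\alpha/\sqrt Q)-\phi(\beta/\sqrt Q)\big)$, which together with the sign pattern of $U_1$ and the symmetry of $X_0$ yields $\mathbb E[X_0U_1]=-2\sqrt Q\sum_i a_i\big(\phi(B_i/\sqrt Q)-\phi(B_{i+1}/\sqrt Q)\big)$. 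Substituting $P$ from the previous step reproduces the first three terms of \eqref{eq: est cost n step}.

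\textbf{Estimation cost: $\mathbb E[\widehat X_1^2]$.} Write $g(y)=\int x\, f_{X_1}(x)\tfrac{1}{\sqrt N}\phi\!\big(\tfrac{y-x}{\sqrt N}\big)\,dx$ and recall $f_{Y_1}(y)=\int f_{X_1}(x)\tfrac{1}{\sqrt N}\phi\!\big(\tfrac{y-x}{\sqrt N}\big)\,dx$, so that $\widehat X_1=g(Y_1)/f_{Y_1}(Y_1)$ and $\mathbb E[\widehat X_1^2]=\int g(y)^2/f_{Y_1}(y)\,dy$. Inserting the piecewise density \eqref{eq: distr X_1} and using linearity of the integral, both $f_{Y_1}$ and $g$ split into $2n$ contributions indexed by the positive and negative segments: $f_{Y_1}(y)=\sum_{i=1}^n(F_{-i}(y)+F_i(y))$ and $g(y)=\sum_{i=1}^n(E_{-i}(y)+E_i(y))$, where each $F_{\pm i}$ is the integral over the appropriate truncation interval of a product of two Gaussians in $x$ and each $E_{\pm i}$ is the same integrand times $x$. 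Completing the square in $x$ rewrites the Gaussian$\times$Gaussian product as (Gaussian in $y$)$\times$(Gaussian in $x$ about the posterior mean); integrating the latter over the finite interval gives the $\Phi$-differences, while the extra factor $x$ in $E_{\pm i}$ contributes in addition a $\phi$-term evaluated at the interval endpoints (again from $\int x\phi=-\phi$). This produces the closed forms \eqref{eq: F_-i}--\eqref{eq: E_i} and hence the final integral term of \eqref{eq: est cost n step}; combining with $\mathbb E[X_1^2]$ finishes the proof, with the segment integrals detailed in Appendix A.

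\textbf{Main obstacle.} The only delicate part is the bookkeeping for the overlapping segments together with the completing-the-square evaluation of $E_{\pm i},F_{\pm i}$: one must check that the truncation limits $[B_i-a_i,B_{i+1}-a_i)$ and their mirror images line up correctly after the shift $x\mapsto x\mp a_i$ and that the overlaps are not double counted. This is in fact automatic, since \eqref{eq: distr X_1} already encodes the correct density of $X_1$ and all subsequent operations are linear integrals, but it is the step most prone to sign and index errors. The remaining integral over $y$ has no closed form and is simply left in integral form.
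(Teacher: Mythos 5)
Your proposal is correct and follows essentially the same route as the paper's Appendix A: both use the orthogonality identity $S=\mathbb E[X_1^2]-\mathbb E[(\mathbb E[X_1|Y_1])^2]$, split $f_{Y_1}$ and the posterior-mean numerator into the $2n$ segment integrals $F_{\pm i},E_{\pm i}$, and evaluate these by completing the square (the paper packages this as the $I_1,I_2$ lemma). Your computation of $\mathbb E[X_1^2]$ via the cross term $\mathbb E[X_0U_1]$ rather than by direct segment integration of $(x\pm a_i)^2f_{X_0}$ is an equivalent, cosmetic variation.
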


\begin{figure*}[t]
\begin{align}
    &F_{-i}\!=\!\sqrt{\frac{1}{Q+N}}\phi\brackets{\frac{y-a_i}{\sqrt{Q+N}}} \!\sbrackets{\Phi\!\brackets{\!\sqrt{\frac{Q+N}{QN}}\brackets{B_{i+1}+\frac{Q(y-a_i)}{Q+N}}}\!-\! \Phi\brackets{\sqrt{\frac{Q+N}{QN}}\brackets{B_{i}+\frac{Q(y-a_i)}{Q+N}}}}, \label{eq: F_-i}\\
    &F_i = \sqrt{\frac{1}{Q+N}}\phi\brackets{\frac{y+a_i}{\sqrt{Q+N}}}\!\sbrackets{\!\Phi\brackets{\sqrt{\frac{Q+N}{QN}}\brackets{B_{i+1}- \frac{Q(y+a_i )}{Q+N}}}\!-\! \Phi\brackets{\sqrt{\frac{Q+N}{QN}}\brackets{B_{i}- \frac{Q(y+a_i )}{Q+N}}}}, \label{eq: F_i} \\
    & E_{-i}=\frac{a_iN + yQ}{Q+N}F_{-i} - \frac{\sqrt{QN}}{2\pi(Q+N)} \left[ \exp\brackets{-\frac{Q(-B_i+a_i-y)^2 + NB_i^2}{2QN}  }   - \exp\brackets{-\frac{Q(-B_{i+1}+a_i-y)^2 + NB_{i+1}^2}{2QN}  }    \right],\label{eq: E_-i}\\
    &E_i=\frac{-a_iN + yQ}{Q+N}F_{i} - \frac{\sqrt{QN}}{2\pi(Q+N)}\left[ \exp\brackets{-\frac{Q(B_{i+1}-a_i-y)^2 + NB_{i+1}^2}{2QN}  } - \exp\brackets{-\frac{Q(B_{i}-a_i-y)^2 + NB_{i}^2}{2QN}  }    \right].\label{eq: E_i}
\end{align}
\vspace{-0.5em}
\noindent\rule{\textwidth}{0.4pt}
\end{figure*}

\section{First-order Optimality}\label{sec: first-order optimality}
It is clear that at $P=0$, the first DM $\gamma_1$ takes no action, i.e., $U_1=0$, and thus the system state becomes $X_1 = X_0$. In such case, the second DM $\gamma_2$ must estimate the intact continuous Gaussian source state $X_0$ based on its noisy observation. Hence, the estimation cost at $P=0$ is $S(P) = \frac{QN}{Q+N}$, which is precisely the MMSE of estimating Gaussian $X_0\sim\mathcal{N}(0,Q)$ from the output of the additive white Gaussian noise (AWGN) channel $Y_1 = X_0+Z_1$ with noise $Z_1\sim\mathcal{N}(0,N)$. As the available power $P$ begins to increase from zero, the first DM starts to work at the expense of power consumption, which in turn improves the estimation accuracy and decreases the estimation cost.

Our step function LoPE strategy corresponds to discrete signaling over a continuous channel. In the seminal work \cite{verdu2002spectral}, Verdú showed that the discrete BPSK signaling strategy over an AWGN channel achieves in the low-power regime the first-order optimality considering spectral efficiency, i.e., a simple binary signaling scheme is sufficiently communication-efficient in the low power regime. Even though the Witsenhausen setup corresponds to an AWGN channel with an additive state $X_0$, our result summarized in Theorem \ref{thm: first order opt} shows that in this distinct setting, the discrete LoPE scheme still achieves first-order optimality in estimation cost as power increases from zero, matching the performance of the optimal linear strategy. Note that, this first-order optimality is a fundamental necessary condition that the universally optimal control strategy must satisfy.

% In this section, we establish that both the LoPE strategy and the optimal linear strategy achieve the optimal first-order decay rate in estimation cost when the power increases from zero, see Theorem \ref{thm: first order opt}. 

\begin{theorem}\label{thm: first order opt}
Both the optimal linear strategy and the LoPE strategy satisfy the first-order optimality criterion as $P$ increases from zero, specifically, their estimation cost derivatives at $P=0$ satisfy $S'(P) = -\infty$
\end{theorem}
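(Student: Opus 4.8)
The plan is to establish, for each of the two strategies, an expansion of the form $S(P)=\frac{QN}{Q+N}-c\sqrt{P}+O(P)$ with a constant $c>0$ (the base value $S(0)=\frac{QN}{Q+N}$ is already recorded in the paragraph preceding the theorem). Granting such an expansion, $\frac{S(P)-S(0)}{P}\le -\,c/\sqrt{P}+O(1)\to-\infty$ as $P\to0^{+}$, which is exactly $S'(0)=-\infty$; no lower bound on $S(P)$ is required. So the entire task is to show that $P$ vanishes \emph{quadratically} in the natural scalar parameter of the scheme, while the cost reduction $S(0)-S$ vanishes only \emph{linearly} in that parameter, with a genuinely nonzero coefficient.

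\textbf{Optimal linear strategy.} First I would pin down the best linear policy at budget $P$. For $U_1=\lambda X_0+\mu$, joint Gaussianity gives $S=\frac{(1+\lambda)^{2}QN}{(1+\lambda)^{2}Q+N}$, which is independent of $\mu$, while $P=\lambda^{2}Q+\mu^{2}$; hence $\mu=0$, and among $\lambda=\pm\sqrt{P/Q}$ the minimizer is $\lambda=-\sqrt{P/Q}$ (it makes $(1+\lambda)^{2}$ smaller), so $S_{\mathrm{lin}}(P)=\frac{(1-\sqrt{P/Q})^{2}QN}{(1-\sqrt{P/Q})^{2}Q+N}$ for small $P$. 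A one-line chain-rule computation with $r=\sqrt{P/Q}$ (giving $\frac{dS}{dr}\big|_{r=0}=-\frac{2QN^{2}}{(Q+N)^{2}}$, then multiplying by $\frac{dr}{dP}=\frac{1}{2\sqrt{PQ}}$) yields $S_{\mathrm{lin}}(P)=\frac{QN}{Q+N}-\frac{2\sqrt{Q}\,N^{2}}{(Q+N)^{2}}\sqrt{P}+O(P)$, so $c=\frac{2\sqrt{Q}\,N^{2}}{(Q+N)^{2}}>0$ and $S_{\mathrm{lin}}'(0)=-\infty$.

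\textbf{LoPE.} It suffices to treat the binary (BPSK, $n=1$) member of the family; the cost optimized over all LoPE parameters is no larger, so it inherits $S'(0)=-\infty$. With $n=1$, $a_{1}=a$, $B_{1}=0$, $B_{2}=\infty$, equation \eqref{eq: power cost n step} gives $P=2a^{2}p_{1}=a^{2}$ (since $p_{1}=\Phi(\infty)-\Phi(0)=\tfrac12$), and $X_{1}=X_{0}-a\,\mathrm{sign}(X_{0})$. Instead of differentiating the closed form \eqref{eq: est cost n step}, I would upper-bound $S$ by the cost incurred when the second DM is forced to use the \emph{fixed} linear estimator $\widehat X_{1}=\frac{Q}{Q+N}Y_{1}$; expanding $\mathbb E[(X_{1}-\widehat X_{1})^{2}]$ and using $Z_{1}\perp(X_{0},\mathrm{sign}(X_{0}))$ together with $\mathbb E[X_{0}\,\mathrm{sign}(X_{0})]=\mathbb E[|X_{0}|]=\sqrt{2Q/\pi}$ gives the exact identity
\begin{align*}
S_{\mathrm{LoPE}}(a)\ &\le\ \mathbb E\!\left[\Big(X_{1}-\tfrac{Q}{Q+N}Y_{1}\Big)^{2}\right]\\
&=\frac{QN}{Q+N}-\frac{2N^{2}}{(Q+N)^{2}}\sqrt{\tfrac{2Q}{\pi}}\;a+\frac{N^{2}}{(Q+N)^{2}}\,a^{2}.
\end{align*}
Since $P=a^{2}$, dividing $S_{\mathrm{LoPE}}(P)-\frac{QN}{Q+N}$ by $P$ bounds it above by $-\frac{2N^{2}}{(Q+N)^{2}}\sqrt{2Q/\pi}\,/\sqrt{P}+\frac{N^{2}}{(Q+N)^{2}}$, which tends to $-\infty$; hence $S_{\mathrm{LoPE}}'(0)=-\infty$.

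\textbf{Where the difficulty sits.} Both parts become short once the mechanism ``$P\sim(\mathrm{parameter})^{2}$ but $S(0)-S\sim(\mathrm{parameter})$'' is isolated; the only non-formal point is that this linear coefficient is nonzero. For the linear scheme it is the explicit constant above; for LoPE it is exhibited by the suboptimal-estimator bound, whose linear coefficient $\frac{2N^{2}}{(Q+N)^{2}}\sqrt{2Q/\pi}$ is manifestly positive (this crude bound is not claimed tight, leaving room for LoPE to beat the linear scheme at low power, as the numerics suggest). If one instead insists on differentiating \eqref{eq: est cost n step} at $a=0$, the work migrates into justifying differentiation under the integral sign and evaluating $\partial_{a}$ of the integral term there (with $\mathbb E[X_{1}^{2}]=Q-\frac{4\sqrt{Q}}{\sqrt{2\pi}}a+a^{2}$ handling the non-integral part); that estimate is the step I would expect to be heaviest, and the fixed-estimator bound is precisely the device I would use to avoid it.
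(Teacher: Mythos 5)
Your proof is correct, and the LoPE half takes a genuinely different route from the paper's. For the linear strategy both arguments coincide: you re-derive Witsenhausen's optimal linear policy and expand $S_{\ell}(P)=\frac{QN}{Q+N}-\frac{2\sqrt{Q}N^{2}}{(Q+N)^{2}}\sqrt{P}+O(P)$, while the paper cites \cite[Lemma 11]{witsenhausen1968} and differentiates the same closed form; this is cosmetic. For LoPE, the paper works with the exact MMSE expression $S_{\mathrm{BPSK}}(a)=T_{1}(a)-T_{2}(a)$, differentiates the integral term $T_{2}$ at $a=0$ by invoking dominated convergence plus a structural lemma on derivatives of products of Gaussian densities and CDFs to justify interchanging derivative and integral, and then argues (rather informally) that the resulting constant $C=\partial_{a}T_{2}(a)\vert_{a=0}$ is nonnegative so that the $-\tfrac{1}{a}\sqrt{2Q/\pi}$ term dominates. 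You instead bound the MMSE from above by the error of the fixed linear estimator $\widehat{X}_{1}=\frac{Q}{Q+N}Y_{1}$, which yields the exact polynomial-in-$a$ bound $\frac{QN}{Q+N}-\frac{2N^{2}}{(Q+N)^{2}}\sqrt{2Q/\pi}\,a+\frac{N^{2}}{(Q+N)^{2}}a^{2}$ with no differentiation under the integral sign and no sign analysis of $C$; since an upper bound on the difference quotient tending to $-\infty$ suffices, this closes the argument rigorously and more elementarily. The trade-off is precision: the paper's computation is aimed at the true first-order coefficient of $S_{\mathrm{BPSK}}$, whereas your coefficient $\frac{2N^{2}}{(Q+N)^{2}}\sqrt{2Q/\pi}$ is strictly smaller than the linear scheme's $\frac{2\sqrt{Q}N^{2}}{(Q+N)^{2}}$ (by the factor $\sqrt{2/\pi}<1$), so your bound alone certifies $S'(0)=-\infty$ but cannot by itself show LoPE matching or beating the linear scheme to first order --- which is all the theorem asserts, so nothing is lost for the stated claim. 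Your reduction from general $n$-step LoPE to BPSK mirrors the paper's.
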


The proof of this theorem is given in Appendix B. 
This result implies that it is not necessary to communicate continuous linear inputs: Discrete, or even binary, input strategies can be equally effective in the low-power regime.

\newpage
\section{Parameter Optimization and Empirical Study}\label{sec: empirical study}
In our simulation, we slightly modify the objective function \eqref{eq: obj 1} to better explore and visualize the trade-off between the two-stage costs: Instead of fixing the weight for the second estimation cost to be $1$ and varying only the first trade-off constant $k^2$, we normalize both terms by dividing by $k^2+1$. This is equivalent to a weighted optimization problem parameterized by $\omega \triangleq \frac{k^2}{k^2+1}\in[0,1]$, each of which corresponds to a supporting hyperplane in a two-dimensional power-estimation cost space. Specifically, for fixed $k^2\geq 0$ and quantization level $n\in\mathbb N$, we determine the optimal parameters $(\mathbf{a}_1^n = [a_1,...,a_n]^\top, \mathbf{B}_1^n = [B_1,...,B_n]^\top)$ satisfying \eqref{eq: a_i} and \eqref{eq: B_i} such that
\begin{align*}
    (\mathbf{a}^*_{\omega}, \mathbf{B}^*_{\omega})& =  \argmin\frac{1}{k^2+1}\sbrackets{ k^2 P(\mathbf{a}_1^n,\mathbf{B}_1^n) + S(\mathbf{a}_1^n,\mathbf{B}_1^n) }\\
    &=
    \argmin\sbrackets{ \omega P(\mathbf{a}_1^n,\mathbf{B}_1^n) + (1-\omega) S(\mathbf{a}_1^n,\mathbf{B}_1^n) },
\end{align*}
which gives us the best configuration for the given parameter $k^2$. Then, we plot the curve $\brackets{P(\mathbf{a}^*_\omega, \mathbf{B}^*_\omega), S(\mathbf{a}^*_\omega, \mathbf{B}^*_\omega)}\in\mathbb R^2_+$ for all $\omega=\frac{k^2}{k^2+1}\in[0,1]$, which gives us the optimized estimation cost value at each specific power consumption.

We now examine how the parameters $(\mathbf{a}_1^n,\mathbf{B}_1^n)$ of the LoPE strategy influence the distribution of the system state $X_1$ in \eqref{eq: distr X_1} under different power constraints. Fixing $Q=1,N=0.1$, we compare two different LoPE schemes: 1) the simplest BPSK strategy is defined in \eqref{eq: BPSK}, and 2) a more refined 8-step LoPE strategy with more tuning parameters. 

First, consider the power-sufficient case when $P=0.93$.  Fig. \ref{fig: distr_X_1} shows the resulting distributions of $X_1$ for both strategies, with parameters optimized for the given conditions. As we can see, when power is sufficient, as the number of steps $n \in \mathbb{N}$ (hence the number of tuning parameters) increases, the shape of the distribution of $X_1$ becomes more concentrated around zero with larger amplitude. This concentration facilitates a more deterministic and accurate estimation for the second DM. Therefore, in the high-power scenario, the LoPE strategy enables highly effective estimation performance, especially with more quantization steps. Furthermore, with $n\rightarrow\infty$, the distribution of $X_1$ converges to a Dirac delta function centered at zero.

% \documentclass{article}
% \usepackage{pgfplots}
% \usepackage{tikz}
% \pgfplotsset{compat=newest}

% \begin{document}

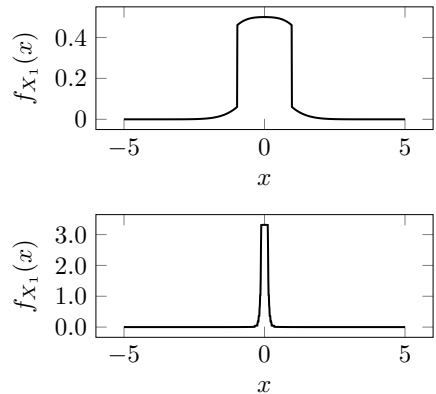
\begin{figure}
    \centering
    % First TikZ figure
\begin{tikzpicture}[scale=0.95]
        \begin{axis}[
            width=6.3cm, height=3.3cm,
            xlabel={$x$},
            ylabel={$f_{X_1}(x)$},
        ]

        % MAIN PLOT: load data from fX1_data.dat
        \addplot[thick] table [x=x, y=fX, col sep=space] {fig/fX1_data_BPSK.dat};
        % \addlegendentry{$f_{X_1}(x)$}

        \end{axis}
    \end{tikzpicture}

    \vspace{0.28cm} % Adds vertical space between the two plots

    % Second TikZ figure
    \begin{tikzpicture}[scale=0.95]
    \centering
        \begin{axis}[
            width=6.3cm, height=3.3cm,
            xlabel={$x$},
            ylabel={$f_{X_1}(x)$},
            yticklabel style={
    /pgf/number format/fixed,
    /pgf/number format/precision=1,
    /pgf/number format/fixed zerofill
}
        ]

        % MAIN PLOT: load data from fX1_data.dat
        \addplot[thick] table [x=x, y=fX, col sep=space] {fig/fX1_data_n=8.dat};
        % \addlegendentry{$f_{X_1}(x)$}

        \end{axis}
    \end{tikzpicture}

    \caption{\small{Optimized distribution of $f_{X_1}(x)$ for BPSK and 8-step LoPE strategies under $P=0.93$, $Q=1$, $N=0.1$.}}
    \label{fig: distr_X_1}
    \vspace{-0.3cm}
\end{figure}

% \documentclass{article}
% \usepackage{pgfplots}
% \usepackage{caption}
% \usepackage{subcaption}
% \pgfplotsset{compat=newest}

% \begin{document}

% \begin{figure}
%     \centering
    
%     % First TikZ plot
%     \begin{subfigure}[b]{\textwidth}
%         \centering
%         \begin{tikzpicture}
%             \begin{axis}[
%                 width=0.75\textwidth, height=5cm,
%                 xlabel={$x$},
%                 ylabel={$f_{X_1}(x)$},
%             ]
%             \addplot[thick] table [x=x, y=fX, col sep=space] {fig/Tikz/fX1_data_BPSK.dat};
%             \end{axis}
%         \end{tikzpicture}
%         \caption{BPSK strategy ($n=2, B_2=0$)}
%     \end{subfigure}
    
%     \vspace{0.5cm}  % adjust spacing between plots
    
%     % Second TikZ plot
%    \begin{subfigure}[b]{\textwidth}
%     \centering
%     \begin{tikzpicture}
%         \begin{axis}[
%             width=0.75\textwidth, height=5cm,
%             xlabel={$x$},
%             ylabel={$f_{X_1}(x)$},
%            yticklabel style={
%     /pgf/number format/fixed,
%     /pgf/number format/precision=1,
%     /pgf/number format/fixed zerofill
% }
%         ]
%         \addplot[thick] table [x=x, y=fX, col sep=space] {fig/Tikz/fX1_data_n=8.dat};
%         \end{axis}
%     \end{tikzpicture}
%     \caption{8-step LoPE strategy}
% \end{subfigure}

%     \caption{Optimized distribution of $f_{X_1}(x)$ for BPSK and 8-step LoPE strategies under $P=0.93$, $Q=1$, $N=0.1$.}
%     \label{fig: distr_X_1}
% \end{figure}

% \end{document}

In contrast, in the low-power regime, the power budget restricts the shaping flexibility for the parameters. Hence, the system state $X_1$ remains closer to the continuous source state $X_0$. As a result, reconstructing $X_1$ accurately is more challenging for $\gamma_2$. In such cases, LoPE becomes more power-saving and communication-efficient than the linear scheme, since the first DM $\gamma_1$ transmits a discrete signal \eqref{eq: U_1} rather than the (scaled) continuous Gaussian state $X_0$.

Fig. \ref{fig: performance comparison} compares the simulation performance in the low-power regime for $Q=1,N=0.1$, evaluating the cost functions for the 2-step and 4-step LoPE strategies (i.e., $n=2$ and $n=4$), denoted by $S_2(P),S_4(P)$, the optimal linear strategy  $S_{\ell}(P) $ in \cite[Lemma 11]{witsenhausen1968}, and the optimal Gaussian strategy $S_{\mathsf{G}}(P)$ in \cite[Lemma 12]{witsenhausen1968}, which serves as the convex envelope of the optimal linear strategy. As shown in the figure, they all exhibit an estimation cost decay rate of $-\infty$ as $P$ increases from zero, consistent with our analysis in Sec.\ref{sec: first-order optimality}. Moreover, when $P\leq 0.171$, the 4-step LoPE strategy outperforms the optimal Gaussian and linear schemes. 

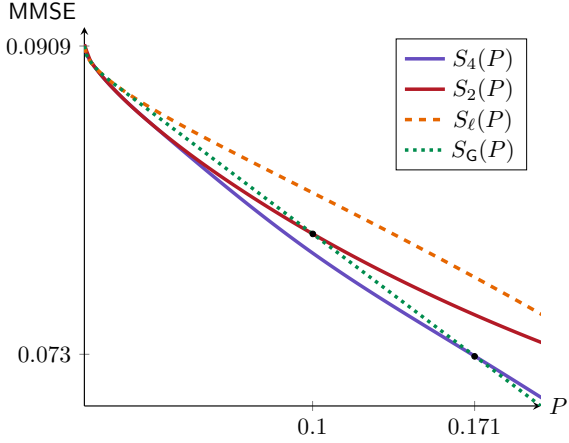
\begin{figure}[t]
      \centering
      \resizebox{0.47\textwidth}{!}{% \documentclass{article}
% \usepackage{pgfplots}
% \pgfplotsset{compat=1.17}
% \usepackage{xcolor}

\definecolor{airforceblue}{rgb}{0.9, 0.4, 0.0}      % More saturated, vivid blue
\definecolor{antiquebrass}{rgb}{0.0, 0.55, 0.3}         % Warmer, more contrast
\definecolor{alizarin}{rgb}{0.7, 0.1, 0.15}          % Sharper, more vivid red-orange
\definecolor{amethyst}{rgb}{0.4, 0.3, 0.75}       % Deeper purple, more academic

% \begin{document}

\begin{tikzpicture}[scale=0.95]
\begin{axis}[
    xlabel={$P$},
    ylabel={$\mathsf{MMSE}$},
    legend pos=north east,
    axis lines=middle,
    axis line style={black, line width=0.5pt},
    xmin=0, xmax=0.20,
    ymin=0.070, ymax=0.092,
    xtick={0.1, 0.171},
    ytick={0.073, 0.0909},
    xlabel style={at={(ticklabel* cs:1)}, anchor=west},
    ylabel style={at={(ticklabel* cs:1)}, anchor=south east},
    restrict x to domain=0:0.5,
    restrict y to domain=0.04:0.091,
    % grid=both,  % <--- This enables both major and minor grid lines
    % minor grid style={dashed, lightgray},  % Style of minor grid lines
    % major grid style={solid, gray},        % Style of major grid lines
   scaled x ticks=false,
xticklabel style={/pgf/number format/.cd, fixed, precision=4},
scaled y ticks=false,
yticklabel style={/pgf/number format/.cd, fixed, precision=4},
]

% Plot S_{\mathsf{coord2}}(P)
\addplot[amethyst, line width=1.5pt, solid, smooth, samples=100] table [col sep=comma, x index=0, y index=1] {fig/4eq.csv};
\addlegendentry{$S_4(P)$}

% Plot S_2(P)
\addplot[alizarin, line width=1.5pt, solid, smooth, samples=100] table [col sep=comma, x index=0, y index=1] {fig/2eq.csv};
\addlegendentry{$S_2(P)$}

% Plot S_{\ell}(P)
\addplot[airforceblue, line width=1.5pt, dashed] table [col sep=comma, x index=0, y index=1] {fig/linear.csv};
\addlegendentry{$S_{\ell}(P)$}

% % Plot S_{\mathsf{G}}(P)
\addplot[antiquebrass, line width=1.5pt, dotted] table [col sep=comma, x index=0, y index=1] {fig/Gaussian.csv};
\addlegendentry{$S_{\mathsf{G}}(P)$}

% Highlight key points
\addplot[only marks, mark=*, mark options={scale=0.6}, color=black] coordinates {
    (0.1001, 0.07999) (0.171, 0.07288)
};

\end{axis}
\end{tikzpicture}

% \end{document} }
      \caption{\small{Performance comparison of different control strategies in the low-power high-estimation cost regime when $Q=1,N=0.1$.}}
      \label{fig: performance comparison}
      \vspace{-0.3cm}
   \end{figure}

In \cite{vu2020hierachical}, the authors proved that optimal linear controllers are strictly suboptimal in the high SNR regime ($Q/N$ large) when $k^2 < 0.318$ and posed the open question of whether linear controllers are also strictly suboptimal for all ratio $Q/N $ and $k^2 > 0$. Our simulation results show that the trade-off parameters for which the 4-step LoPE strictly surpasses the optimal Gaussian and linear schemes correspond approximately to $k^2\in[0.21,+\infty)$ in \eqref{eq: obj 1}. Moreover, for $k^2 \in[0,0.39]$, the 2-step hierarchical search scheme can be shown to outperforms the best linear strategy. Our findings provide a concrete partial answer to this open question: when $Q/N=10$ for all $k^2 > 0$, the linear controllers are strictly suboptimal, and are outperformed by the combination of the LoPE strategy and the hierarchical search scheme.

In many prior attempts to study Witsenhausen counterexample, a recurring discovery is that near-optimal solutions resemble a slightly sloped step function, also referred to as a sawtooth structure. Several works \cite{lee2001hierarchical, karlsson2011iterative, telsang2021numerical} attempted to approximate such structure by subdividing the existing quantization steps into smaller staircase-like segments. Moreover, in \cite{baglietto1997nonlinear}, the dependence of the slope on the trade-off parameter $k^2$ in \eqref{eq: obj 1} has been first noticed:
When power conservation is prioritized (e.g. $k^2=10$ in \cite[Fig. 4]{baglietto1997nonlinear}), the resulting control function resembles a linear strategy that has a steeper slope, approximately equal to one. Conversely, when estimation accuracy is prioritized (e.g. $k^2=0.1$ in \cite[Fig. 5]{baglietto1997nonlinear}), the near optimal solution approaches an ideal step function with slope small, see also \cite[Fig. 4]{baglietto2001numerical}. Hence, a natural guess is: the more the controller must save power, the larger the slope the control strategy tends to adopt.

Our analysis and simulation results support this interpretation of the role of the slope: In low-power regimes, the most communication-efficient approach is for the first DM to transmit discrete signals, making $U_1$ a step function of $X_0$. This results in $X_1$ a piecewise linear function of $X_0$ with slope one, see Fig. \ref{fig:npointeq_U_1} and \ref{fig:comp2_point} (left). Conversely, when power is sufficient and estimation precision becomes more crucial, $\gamma_1$ can afford more power to cancel out the continuous source state. Outputting a discrete $X_1$ as an ideal step function of $X_0$ with slope zero (see also Fig. \ref{fig:comp2_point}, right) simplifies and improves the estimation for the second DM. These findings suggest that this near-optimal sawtooth solution can be viewed as a combination of these two approaches: LoPE strategy dominates the low-power regime, while designing the system state as an ideal step function is preferred in the high-power regime.

\section*{Appendix A: Derivation of Theorem \ref{thm: lope}}\label{app a}
In order to derive the quantities \eqref{eq: F_-i} - \eqref{eq: E_i}, and hence the estimation cost \eqref{eq: est cost n step}, we need the following lemma which gives the explicit expressions of the Gaussian integrations
\begin{lemma}
    For appropriately chosen parameters, the closed-form results of the Gaussian integrations are
    \begin{align}
        I_1 = &\int_{A}^B \exp{\brackets{-ax^2 + bx + c}}dx\nonumber\\
        &= \sqrt{\frac{\pi}{a}}\exp{\brackets{c +
        \frac{b^2}{4a}}}\left[\Phi\brackets{\sqrt{2a}\brackets{B-\frac{b}{2a}}} \right.\nonumber\\
        &\quad \left.- \Phi\brackets{\sqrt{2a}\brackets{A-\frac{b}{2a}}}\right],\label{eq: I_1}\\
        I_2 = &\int_A^B x\cdot \exp{\brackets{-ax^2 + bx + c}}dx\nonumber\\
        &= \frac{1}{2a}\cdot \left[ b\cdot I_1 - (\exp{(-aB^2 + bB + c)}\right.\nonumber\\
        &\quad \left.- \exp{(-aA^2 + bA + c)})\right].\label{eq: I_2}
    \end{align}
\end{lemma}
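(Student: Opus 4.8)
The plan is to establish both identities by elementary manipulations of the one‑dimensional Gaussian integral; the only genuine hypothesis hidden behind ``appropriately chosen parameters'' is $a>0$ (so that $\sqrt{a}$ and $\sqrt{2a}$ are real and the integrand is a genuine Gaussian bump), together with $A<B$. The semi‑infinite cases $A=-\infty$ and/or $B=+\infty$ then follow by the obvious limiting argument, using $\Phi(-\infty)=0$ and $\Phi(+\infty)=1$.

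For $I_1$, the first step is to complete the square in the exponent, writing $-ax^2+bx+c = -a\big(x-\frac{b}{2a}\big)^2 + \frac{b^2}{4a}+c$, which pulls the constant factor $\exp\!\big(c+\frac{b^2}{4a}\big)$ outside the integral. Next I would substitute $u=\sqrt{2a}\,\big(x-\frac{b}{2a}\big)$, so that $\exp\!\big(-a(x-\frac{b}{2a})^2\big)=\exp(-u^2/2)$ and $dx=du/\sqrt{2a}$, and the integration limits become $\sqrt{2a}\big(A-\frac{b}{2a}\big)$ and $\sqrt{2a}\big(B-\frac{b}{2a}\big)$. Recognizing that $\int_\alpha^\beta e^{-u^2/2}\,du=\sqrt{2\pi}\,(\Phi(\beta)-\Phi(\alpha))$ and simplifying $\sqrt{2\pi}/\sqrt{2a}=\sqrt{\pi/a}$ yields \eqref{eq: I_1} directly.

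For $I_2$, instead of integrating by parts I would exploit the exact derivative $\frac{d}{dx}\exp(-ax^2+bx+c)=(-2ax+b)\exp(-ax^2+bx+c)$. Integrating this identity over $[A,B]$ gives, on one side, the boundary term $\exp(-aB^2+bB+c)-\exp(-aA^2+bA+c)$, and on the other side $-2a\,I_2+b\,I_1$. Solving for $I_2$ and substituting the closed form of $I_1$ obtained above produces \eqref{eq: I_2}.

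There is no serious obstacle here: the computation is routine, so the only points requiring care are bookkeeping — fixing the substitution constants and the normalization of $\Phi$, and confirming that in every later invocation of the lemma (in the derivations of $F_{\pm i}$ and $E_{\pm i}$ in Appendix~A) the quadratic coefficient playing the role of $a$ is strictly positive, which it is because it is assembled from the positive variances $Q$ and $N$.
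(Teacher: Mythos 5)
Your proof is correct: completing the square and substituting $u=\sqrt{2a}\,(x-\tfrac{b}{2a})$ gives \eqref{eq: I_1}, and integrating the exact derivative $\tfrac{d}{dx}\exp(-ax^2+bx+c)=(-2ax+b)\exp(-ax^2+bx+c)$ over $[A,B]$ and solving for $I_2$ gives \eqref{eq: I_2}; the paper itself states this lemma without proof, treating it as a standard Gaussian integral identity. Your added observations — that the hidden hypothesis is $a>0$, that the semi-infinite limits follow from $\Phi(\pm\infty)$, and that the quadratic coefficient is indeed positive in every later invocation for $F_{\pm i}$ and $E_{\pm i}$ — are accurate and supply exactly the bookkeeping the paper leaves implicit.
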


\begin{proof}[Derivation of the estimation cost in Theorem \ref{thm: lope}]
    
 Let's continue with the distribution of $X_1$ given in \eqref{eq: distr X_1}. Since $Y_1 = X_1+Z_1$ where $Z_1\sim\mathcal{N}(0,N)$, we have
\begin{align*}
&f_{Y_1|X_1}(y|x) = \frac{1}{\sqrt{N}}\phi\brackets{\frac{y-x}{\sqrt{N}}},\\
    &f_{X_1,Y_1}(x,y)\\
    &=\! \frac{1}{\sqrt{QN}}\!\phi\brackets{\frac{y-x}{\sqrt{N}}}\!\sum_{i=1}^n \left[\phi\brackets{\frac{x-a_i}{\sqrt{Q}}}\!\mathbf{1}_{x\in(-B_{i+1}+a_i, -B_i+a_i]}\right. \\
    &\quad \left.+ \phi\brackets{\frac{x+a_i}{\sqrt{Q}}}\mathbf{1}_{x\in[B_i-a_i, B_{i+1}-a_i) }\right].
\end{align*}
Marginalization gives us the probability distribution of $Y_1$
\begin{align}
    f_{Y_1}(y) &= \int_{-\infty}^\infty f_{X_1,Y_1}(x,y) dx\nonumber\\
    &= \sum_{i=1}^n\underbrace{\int_{-B_{i+1}+a_i}^{-B_i+a_i}\frac{1}{\sqrt{QN}}\phi\brackets{\frac{x-a_i}{\sqrt{Q}}}\phi\brackets{\frac{y-x}{\sqrt{N}}} dx}_{F_{-i}}\nonumber\\
    &\quad +\sum_{i=1}^n\underbrace{\int_{B_{i}-a_i}^{B_{i+1}-a_i}\frac{1}{\sqrt{QN}}\phi\brackets{\frac{x+a_i}{\sqrt{Q}}}\phi\brackets{\frac{y-x}{\sqrt{N}}} dx}_{F_{i}}\nonumber\\
    &=\sum_{i=1}^n (F_{-i} + F_i). \label{eq: f_Y_1}
\end{align}
Using the formula \eqref{eq: I_1}, we can get the closed-form expressions of the integral $F_{-i},F_i$ for $i=1,...,n$ given in \eqref{eq: F_-i}

The MMSE estimation of $X_1$ given a realization of $Y_1=y$ is therefore
\begin{align*}
    &\mathbb E[X_1|Y_1 = y] \\
    &= \int_{-\infty}^\infty x\cdot f_{X_1|Y_1}(x|y) dx\\
    &=\frac{1}{f_{Y_1}(y)}\int_{-\infty}^\infty xf_{X_1,Y_1}(x,y)dx\\
    &=\!  \frac{1}{f_{Y_1}(y)}\!\left[\!\sum_{i=1}^n\!\underbrace{\int_{-B_{i+1}+a_i}^{-B_i+a_i}\!\frac{x}{\sqrt{QN}}\phi\brackets{\frac{x-a_i}{\sqrt{Q}}}\!\phi\brackets{\frac{y-x}{\sqrt{N}}}\!dx}_{E_{-i}}\right.\\
    &\quad \left. + \sum_{i=1}^n\underbrace{\int_{B_{i}-a_i}^{B_{i+1}-a_i}\frac{x}{\sqrt{QN}}\phi\brackets{\frac{x+a_i}{\sqrt{Q}}}\phi\brackets{\frac{y-x}{\sqrt{N}}} dx}_{E_{i}}\right]\\
   &= \frac{\sum_{i=1}^n (E_{-i}+E_i)}{\sum_{i=1}^n (F_{-i}+F_i)},
\end{align*}
Exploiting $I_2$ in \eqref{eq: I_2}, we can obtain $E_{-i}, E_i$ for $i=1,...,n$ given in \eqref{eq: E_-i} \eqref{eq: E_i}.

In this way, the closed form of the MMSE of estimating $X_1$ from $Y_1$ becomes
\begin{align*}
    &\mathbb E[(X_1 - (\mathbb E[X_1|Y_1]))^2]\\
    &=  \mathbb{E}[X_1^2] - \mathbb{E}\sbrackets{\left(\mathbb{E}\sbrackets{X_1|Y_1}\right)^2}\\
    \!&\!=\! \sum_{i=1}^n\!\brackets{\!\int_{-B_{i+1}}^{-B_i}\!\!\!(x+a_i)^2\!f_{X_0}(x)dx\! + \!\int_{B_i}^{B_{i+1}}\!\!\!(x-a_i)^2\!f_{X_0}(x)dx\!}\\
    &\quad - \int f_{Y_1}(y)\mathbb E^2[X_1|Y_1=y]dy\\
    &=Q - 4\sqrt{Q}\sum_{i=1}^na_i\brackets{\phi\brackets{\frac{B_i}{\sqrt{Q}}} - \phi\brackets{\frac{B_{i+1}}{\sqrt{Q}}}}\\
    &\quad +2\sum_{i=1}^na_i^2p_i - \int \frac{\brackets{\sum_{i=1}^n (E_{-i}+E_i)}^2}{\sum_{i=1}^n (F_{-i}+F_i)} dy.
\end{align*}
This completes our proof of the estimation cost $S(\mathbf{a}_1^n, \mathbf{B}_1^n)$ in Theorem \ref{thm: lope}.
\end{proof}
% \begin{table}
% \caption{An Example of a Table}
% \label{table_example}
% \begin{center}
% \begin{tabular}{|c||c|}
% \hline
% One & Two\\
% \hline
% Three & Four\\
% \hline
% \end{tabular}
% \end{center}
% \end{table}

%%%%%%%%%%%%%%%%%%%%%%%%%%%%%%%%%%%%%%%%%%%%%%%%%%%%%%%%%%%%%%%%%%%%%%%%%%%%%%%%
\vspace{0.2cm}
\section*{Appendix B: Proof of Theorem \ref{thm: first order opt}}
\begin{proof}
To show the first part of the theorem, we recall the optimal linear strategy given in the following lemma:
\begin{lemma}[\!\!\protect{\cite[Lemma 11]{witsenhausen1968}}]
    The best linear policy is $U_1=-\sqrt{\frac{P}{Q}}X_0$, if $P\leq Q$, otherwise $U_1=-X_0 + \sqrt{P-Q}$, which induces the estimation cost
    \begin{align}
        S_{\ell}(P)=\begin{cases}\frac{(\sqrt{Q}-\sqrt{P})^{2}\cdot N}{(\sqrt{Q}-\sqrt{P})^{2}+N} & \text{ if } P\in[0,\ Q],\\ 0 &\text{ otherwise } .\end{cases}\label{eq: opt linear cost}
    \end{align}
\end{lemma}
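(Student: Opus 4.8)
The plan is to reduce the search over linear---more precisely, affine---first-stage policies to a single-parameter optimization and then minimize the resulting closed-form cost. First I would parameterize the admissible policies as $U_1 = \alpha X_0 + \beta$ for real constants $\alpha,\beta$, so that the induced power cost is $P = \mathbb E[U_1^2] = \alpha^2 Q + \beta^2$ and the system state $X_1 = (1+\alpha)X_0 + \beta$ is Gaussian with mean $\beta$ and variance $(1+\alpha)^2 Q$. The second case $U_1 = -X_0 + \sqrt{P-Q}$ in the statement confirms that an offset $\beta$ must be allowed, so the class I optimize over is genuinely affine.

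Next, since $(X_1, Y_1)$ are jointly Gaussian (with $Y_1 = X_1 + Z_1$ and $Z_1 \indep X_1$), the optimal second-stage estimator $\gamma_2$ is the linear MMSE estimator, and the induced cost depends only on the variance of $X_1$:
\begin{align*}
    S = \frac{\Var(X_1)\cdot N}{\Var(X_1)+N} = \frac{(1+\alpha)^2 Q \cdot N}{(1+\alpha)^2 Q + N}.
\end{align*}
The crucial observation is that $S$ depends on the policy only through $(1+\alpha)^2$ and is strictly increasing in it (its derivative in $\Var(X_1)$ equals $N^2/(\Var(X_1)+N)^2>0$), while the offset $\beta$ enters only the power budget and leaves $S$ untouched, since the estimator perfectly knows the mean of $X_1$. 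Hence the task reduces to minimizing $(1+\alpha)^2$ subject to $\alpha^2 Q + \beta^2 = P$; any power diverted to $\beta$ merely shrinks the feasible range $|\alpha|\le\sqrt{(P-\beta^2)/Q}$ and is wasteful whenever it keeps $\alpha$ from approaching $-1$.

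I would then split into two regimes. For $P \le Q$ we have $\sqrt{P/Q}\le 1$, so $\alpha$ cannot reach $-1$; the minimizer of $(1+\alpha)^2$ over $|\alpha|\le\sqrt{P/Q}$ is the most negative feasible value $\alpha = -\sqrt{P/Q}$, attained with $\beta = 0$. This yields $U_1 = -\sqrt{P/Q}\,X_0$ and $\Var(X_1) = (\sqrt Q - \sqrt P)^2$, which substituted into the formula for $S$ gives exactly \eqref{eq: opt linear cost}. For $P > Q$ one can afford $\alpha = -1$ using power $Q$, forcing $\Var(X_1)=0$ and hence $S=0$; the leftover budget $P-Q$ is absorbed into the offset, e.g. $\beta = \sqrt{P-Q}$, producing $U_1 = -X_0 + \sqrt{P-Q}$.

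The main obstacle is conceptual rather than computational: one must argue cleanly that (i) joint Gaussianity makes the optimal $\gamma_2$ linear so that $S$ attains the stated closed form depending only on $\Var(X_1)$, and (ii) the affine offset $\beta$ is never useful for reducing $S$, so it should vanish in the power-deficient regime and serve only as a ``power sink'' once $\alpha=-1$ becomes reachable. Once the monotonicity of $S$ in $(1+\alpha)^2$ and the boundary behavior at $P=Q$ are established, the choice of the negative root $\alpha=-\sqrt{P/Q}$, the two-regime split, and the final cost formula all follow immediately.
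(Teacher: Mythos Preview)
Your argument is correct. The paper does not actually prove this lemma; it merely restates \cite[Lemma 11]{witsenhausen1968} and immediately uses the closed form \eqref{eq: opt linear cost} to compute $\lim_{P\to 0^+} S_\ell'(P)$. So there is no ``paper's own proof'' to compare against beyond the citation. Your derivation---parameterize affine policies $U_1=\alpha X_0+\beta$, observe that joint Gaussianity makes the MMSE depend only on $\Var(X_1)=(1+\alpha)^2Q$, then minimize $(1+\alpha)^2$ under the power constraint $\alpha^2 Q+\beta^2=P$---is exactly the standard route and recovers both regimes cleanly.
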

Given the cost function \eqref{eq: opt linear cost}, we can easily obtain that
\begin{align*}
    \lim_{P\rightarrow 0^+}\frac{d S_{\ell}(P)}{d P} = \lim_{P\rightarrow 0^+}-\frac{N^2(\sqrt{Q}-\sqrt{P})}{\sqrt{P}[(\sqrt{Q}-\sqrt{P})^2+N]^2}= -\infty.
\end{align*}

Next, to demonstrate first-order optimality for the LoPE scheme with costs \eqref{eq: power cost n step} \eqref{eq: est cost n step} in the low-power regime, it suffices to analyze the scheme with the fewest tuning parameters. This is because if a strategy with the minimum number of parameters achieves the optimal rate of decay, then the more expressive strategies with more parameters will also satisfy the optimality condition. 
    
Hence, let's consider the simplest case of the LoPE strategy, the BPSK strategy given by \eqref{eq: BPSK}. 
% Note that BPSK differs from a general 2-step LoPE strategy (take $n=2$ in \eqref{eq: U_1}), since here we also specify $B_2=0$, making the strategy even simpler.
The power and estimation costs in this case become
    \begin{align*}
        &P_{\text{BPSK}}(a) = a^2,\\
    & S_{\text{BPSK}}(a) = \underbrace{Q-2a\sqrt{\frac{2Q}{\pi}}+ 
    a^2}_{T_1(a)}  - \underbrace{\int\frac{(E_1+E_{-1})^2}{F_1+F_{-1}}dy}_{T_2(a)},
    \end{align*}
    where term $T_1(a)$ is the second moment of $X_1$ and $T_2(a)$ is the power of the MMSE estimator for $X_1$ given $Y_1$, i.e., $\mathbb{E}[\left(\mathbb{E}\sbrackets{X_1|Y_1}\right)^2]$.
    Using the chain rule of the derivative, we can obtain
    \begin{align*}
         \frac{\partial S_{\text{BPSK}}(a)}{\partial P_{\text{BPSK}}(a)}  =\frac{\partial S_{\text{BPSK}}(a)}{\partial a}\frac{\partial a}{\partial a^2}=\frac{1}{2a}\brackets{\frac{\partial T_1(a)}{\partial a} + \frac{\partial T_2(a)}{\partial a}}.
    \end{align*}
Hence, if $\frac{\partial T_1(a)}{\partial a} + \frac{\partial T_2(a)}{\partial a}$ remains a negative finite value when $a\rightarrow 0^+$, the limit of the derivative $\frac{\partial S_{\text{BPSK}}(a)}{\partial P_{\text{BPSK}}(a)}\rightarrow -\infty$.

The first derivative term can be easily derived at $a=0$:
\begin{align*}
    \frac{\partial T_1(a)}{\partial a}\bigg|_{a=0} = -2\sqrt{\frac{2Q}{\pi}}.
\end{align*}

On the other hand, a standard dominated–convergence argument shows that, provided the integrand does not blow up pathologically as $a\rightarrow 0$, we can pass the derivative inside. Therefore, given that $T_2(a)$ is finite and $E_i,F_i$ is smooth w.r.t. $a$ for all $y$, we obtain that
\begin{align*}
    \frac{\partial T_2(a)}{\partial a}\bigg|_{a=0} = \int  \frac{\partial}{\partial a}\frac{(E_1(a,y)+E_{-1}(a,y))^2}{F_1(a,y)+F_{-1}(a,y)} \bigg|_{a=0} dy.
\end{align*}
Moreover, we have
\begin{align}
     &\frac{\partial}{\partial a}\frac{(E_1(a,y)+E_{-1}(a,y))^2}{F_1(a,y)+F_{-1}(a,y)} \bigg|_{a=0}\nonumber\\
     &= \frac{2(E_1(0,y)+E_{-1}(0,y))(\frac{\partial E_1(a,y)}{\partial a}\big|_{a=0}+\frac{\partial E_{-1}(a,y)}{\partial a}\big|_{a=0})}{(F_1(0,y)+F_{-1}(0,y))}\nonumber\\
     &\quad - \frac{(E_1(0,y)+E_{-1}(0,y))^2(\frac{\partial F_1(a,y)}{\partial a}\big|_{a=0}+\frac{\partial F_{-1}(a,y)}{\partial a}\big|_{a=0})}{(F_1(0,y)+F_{-1}(0,y))^2}\nonumber\\
     &=2\frac{yQ}{Q+N}\brackets{\frac{\partial E_1(a,y)}{\partial a}\bigg|_{a=0}+\frac{\partial E_{-1}(a,y)}{\partial a}\bigg|_{a=0}}\label{eq: sum of E_i,F_i derivative}\\
     &\quad - \brackets{\frac{ yQ}{Q+N}}^2\brackets{\frac{\partial F_1(a,y)}{\partial a}\bigg|_{a=0}+\frac{\partial F_{-1}(a,y)}{\partial a}\bigg|_{a=0}}.\nonumber
\end{align}

Lemma \ref{lemma: gaussian derivation} in the follows states that if a function is a product of Gaussian PDFs $\phi(\cdot)$ and CDFs $\Phi(\cdot)$, then, under derivation, it remains a product of $\phi(\cdot)$ and $\Phi(\cdot)$, multiplied by some polynomial factors.

\vspace{-0.1cm}
\begin{lemma}\label{lemma: gaussian derivation}
Let $g_1,...,g_n$ be polynomials of $x$. For some nonnegative parameters $\alpha_1,...,\alpha_n,\beta_1,...,\beta_n$, we define
\begin{align*}
    f(x) = \prod_{i=1}^n\brackets{\phi\brackets{g_i(x)}}^{\alpha_i}\brackets{\Phi\brackets{g_i(x)}}^{\beta_i}.
\end{align*}
Then, the derivative of $f(x)$ can be written as
\begin{align*}
       \frac{df(x)}{dx} =\sum_{k=1}^n \sbrackets{ p_k(x)\prod_{i=1}^n\brackets{\phi\brackets{g_i(x)}}^{\alpha_{i,k}'}\brackets{\Phi\brackets{g_i(x)}}^{\beta_{i,k}'}},
 \end{align*}
    where each $p_k(x)$ is a polynomial of $x$ and $\alpha_{i,k}',...,\beta_{i,k}'$ are some nonnegative parameters for $i,k=1,...,n$.
\end{lemma}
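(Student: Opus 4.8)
The plan is to prove Lemma \ref{lemma: gaussian derivation} by a direct computation using the product rule and the elementary derivative identities for the Gaussian PDF and CDF, namely $\phi'(u) = -u\,\phi(u)$ and $\Phi'(u) = \phi(u)$, followed by the chain rule with $g_i'(x)$ being a polynomial. I would first handle a single factor and then assemble the general case by the Leibniz rule over the $2n$ factors.

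\medskip
\textbf{Step 1 (single factor).} For a factor of the form $\bigl(\phi(g_i(x))\bigr)^{\alpha_i}$, the chain rule gives
\[
\frac{d}{dx}\bigl(\phi(g_i(x))\bigr)^{\alpha_i} = \alpha_i \bigl(\phi(g_i(x))\bigr)^{\alpha_i-1}\,\phi'(g_i(x))\,g_i'(x) = -\alpha_i\,g_i(x)\,g_i'(x)\,\bigl(\phi(g_i(x))\bigr)^{\alpha_i},
\]
so differentiating a PDF power reproduces the same power times a polynomial factor $-\alpha_i g_i(x) g_i'(x)$. For a factor $\bigl(\Phi(g_i(x))\bigr)^{\beta_i}$,
\[
\frac{d}{dx}\bigl(\Phi(g_i(x))\bigr)^{\beta_i} = \beta_i\,g_i'(x)\,\bigl(\Phi(g_i(x))\bigr)^{\beta_i-1}\,\phi(g_i(x)),
\]
which lowers the $\Phi$ exponent by one but raises the $\phi$ exponent by one and multiplies by the polynomial $\beta_i g_i'(x)$; all resulting exponents remain nonnegative provided $\beta_i\ge 1$ (if $\beta_i=0$ the factor is constant and contributes nothing). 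In either case, the derivative of an individual factor is a polynomial times a new product of $\phi$'s and $\Phi$'s with nonnegative integer-shifted exponents.

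\medskip
\textbf{Step 2 (Leibniz rule and assembly).} Write $f(x) = \prod_{j=1}^{2n} h_j(x)$ where the $h_j$ enumerate the $2n$ factors $\phi(g_i)^{\alpha_i}$ and $\Phi(g_i)^{\beta_i}$. The product rule gives $f'(x) = \sum_{j} h_j'(x)\prod_{\ell\ne j} h_\ell(x)$. By Step 1 each $h_j'$ is (polynomial)$\times$(product of $\phi$'s and $\Phi$'s), and the remaining $\prod_{\ell\ne j} h_\ell$ is already of the required form; multiplying two such products merges into a single product of $\phi$'s and $\Phi$'s with nonnegative exponents, the polynomial factors combine into one polynomial $p_k(x)$, and the sum over $j$ (relabelled by $k=1,\dots,n$ after grouping the two derivatives $\partial_x\phi(g_k)^{\alpha_k}$ and $\partial_x\Phi(g_k)^{\beta_k}$ attached to each index $k$) yields exactly the claimed expression with suitable nonnegative parameters $\alpha_{i,k}', \beta_{i,k}'$. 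One should note that the count of terms can be reduced to $n$ (rather than $2n$) precisely because the two factors associated with the same $g_k$ can be collected.

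\medskip
\textbf{Main obstacle.} There is no deep difficulty here; the lemma is essentially bookkeeping. The only point requiring care is the edge case where some exponent $\alpha_i$ or $\beta_i$ equals zero, so that the corresponding factor is identically $1$ and must be excluded from the differentiation (otherwise one would formally write a negative exponent); this is handled by simply dropping such trivial factors at the outset. A secondary subtlety is verifying that after merging the polynomial prefactors $g_i(x)$, $g_i'(x)$ (all polynomials since each $g_i$ is a polynomial) the result is genuinely a single polynomial $p_k(x)$ — immediate since polynomials are closed under products and sums. Hence the statement follows directly.
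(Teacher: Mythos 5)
Your proof is correct and takes essentially the same route as the paper, which simply invokes the identities $\phi'(u)=-u\,\phi(u)$ and $\Phi'(u)=\phi(u)$ together with the chain and product rules; you merely spell out the bookkeeping (and the harmless $\beta_i=0$ edge case) in more detail.
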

\vspace{-0.1cm}
The proof of this result follows directly from the derivatives of Gaussian PDF and CDF:
\begin{align*}
    \frac{d}{dx}\phi(x) = -x\phi(x),\text{ }\frac{d}{dx}\Phi(x) = \phi(x).
\end{align*}

Now, since $F_i(a,y), E_i(a,y),i=\pm 1$ are composed entirely of sums and products of Gaussian PDFs and CDFs, then by Lemma \ref{lemma: gaussian derivation}, their derivatives appeared in term \eqref{eq: sum of E_i,F_i derivative} are also composed of Gaussian PDFs and CDFs, multiplied by some polynomials. Given the exponential tail decay of Gaussian distributions which surpasses the decay rate of all the polynomials, these derivative terms are integrable, and hence the overall integral converges:
\begin{align*}
   \frac{\partial T_2(a)}{\partial a}\bigg|_{a=0} = \int  \frac{\partial}{\partial a}\frac{(E_1(a,y)+E_{-1}(a,y))^2}{F_1(a,y)+F_{-1}(a,y)} \bigg|_{a=0} dy \triangleq C,
\end{align*}
where $C$ is some finite value. 

Next, to deduce the sign of the constant $C$, note that taking the positive $ a\rightarrow 0^+$ implies $X_1\rightarrow X_0$. In this case, the system state to be estimated is made less concentrated and therefore has more variance. Hence, the power of its MMSE estimator $T_2(a) = \mathbb{E}[\left(\mathbb{E}\sbrackets{X_1|Y_1}\right)^2]$ increases accordingly, which indicates its derivative $\frac{\partial T_2(a)}{\partial a}\big |_{a=0} = C\geq 0$.

Overall, we have shown that 
\begin{align*}
    \lim_{a\rightarrow 0}\frac{\partial S_{\text{BPSK}}(a)}{\partial P_{\text{BPSK}}(a)}  
    =\lim_{a\rightarrow 0}\sbrackets{-\frac{1}{a}\sqrt{\frac{2Q}{\pi}} - \frac{C}{2a}}
    =-\infty.
\end{align*}
    
\end{proof}

\bibliographystyle{ieeetr}
\bibliography{IEEEabrv,main}

\end{document}